\numberwithin{equation}{section}
\newtheorem{theorem}{Theorem}[section]
\newtheorem{proposition}[theorem]{Proposition}
\newtheorem{corollary}[theorem]{Corollary}
\newtheorem{remark}[theorem]{Remark}
\newcommand{\cF}{{\ensuremath{\mathcal F}} }
\newcommand{\cN}{{\ensuremath{\mathcal N}} }
\newcommand{\cL}{{\ensuremath{\mathcal L}} }
\newcommand{\bE}{{\ensuremath{\mathbf E}} }
\newcommand{\bP}{{\ensuremath{\mathbf P}} }
\DeclareMathSymbol{\leqslant}{\mathalpha}{AMSa}{"36} 
\DeclareMathSymbol{\geqslant}{\mathalpha}{AMSa}{"3E} 
\DeclareMathSymbol{\eset}{\mathalpha}{AMSb}{"3F}     
\renewcommand{\leq}{\;\leqslant\;}                   
\renewcommand{\geq}{\;\geqslant\;}                   
\newcommand{\dd}{\,\text{\rm d}}             
\newcommand{\sumtwo}[2]{\sum_{\substack{#1 \\ #2}}} 
\newcommand{\bbE}{{\ensuremath{\mathbb E}} }
\newcommand{\bbN}{{\ensuremath{\mathbb N}} }
\newcommand{\bbP}{{\ensuremath{\mathbb P}} }
\newcommand{\bbR}{{\ensuremath{\mathbb R}} }
\newcommand{\bbS}{{\ensuremath{\mathbb S}} }
\newcommand{\bbZ}{{\ensuremath{\mathbb Z}} }
\newcommand{\ga}{\alpha} 
\newcommand{\gd}{\delta} 
\newcommand{\gep}{\varepsilon}       
\newcommand{\gG}{\Gamma}
\newcommand{\go}{\omega} 
\newcommand{\gl}{\lambda} 
\newcommand{\gs}{\sigma}
\def\captionfont@{\footnotesize}
\def\captionheadfont@{\scshape}
\long\def\@makecaption#1#2{%
  \vspace{2mm}
  \setbox\@tempboxa\vbox{\color@setgroup
    \advance\hsize-6pc\noindent
    \captionfont@\captionheadfont@#1\@xp\@ifnotempty\@xp
        {\@cdr#2\@nil}{.\captionfont@\upshape\enspace#2}%
    \unskip\kern-6pc\par
    \global\setbox\@ne\lastbox\color@endgroup}%
  \ifhbox\@ne 
    \setbox\@ne\hbox{\unhbox\@ne\unskip\unskip\unpenalty\unkern}%
  \fi
  \ifdim\wd\@tempboxa=\z@ 
    \setbox\@ne\hbox to\columnwidth{\hss\kern-6pc\box\@ne\hss}%
  \else 
    \setbox\@ne\vbox{\unvbox\@tempboxa\parskip\z@skip
        \noindent\unhbox\@ne\advance\hsize-6pc\par}%
\fi
  \ifnum\@tempcnta<64 
    \addvspace\abovecaptionskip
    \moveright 3pc\box\@ne
  \else 
    \moveright 3pc\box\@ne
    \nobreak
    \vskip\belowcaptionskip
  \fi
\relax
}
\def\writefig#1 #2 #3 {\rlap{\kern #1 truecm
\raise #2 truecm \hbox{#3}}}
\let\orgdescriptionlabel\descriptionlabel
\renewcommand*{\descriptionlabel}[1]{%
  \let\orglabel\label
  \let\label\@gobble
  \phantomsection
  \edef\@currentlabel{#1}%
  \let\label\orglabel
  \orgdescriptionlabel{#1}%
}
\begin{document}

\title[Random graphs and Fokker-Planck equations]{A note on dynamical models on random graphs and 
Fokker-Planck 
equations}

\author{Sylvain Delattre}
\address{Universit\'e Paris Diderot, Sorbonne Paris Cit\'e,  Laboratoire de Probabilit{\'e}s et Mod\`eles Al\'eatoires, UMR 7599,
            F-75205 Paris, France}

\author{Giambattista Giacomin}
\address{Universit\'e Paris Diderot, Sorbonne Paris Cit\'e,  Laboratoire de Probabilit{\'e}s et Mod\`eles Al\'eatoires, UMR 7599,
            F-75205 Paris, France}

\author{Eric Lu\c{c}on}
\address{Universit\'e Paris Descartes, Sorbonne Paris Cit\'e, Laboratoire MAP5, UMR 8145, 75270 Paris, France}

\begin{abstract}
We address the issue of the proximity of interacting diffusion models on large graphs with a uniform degree property and a corresponding mean field model, i.e. a model on the complete graph with a suitably renormalized interaction parameter.
Examples include Erd\H{o}s-R\'enyi graphs with edge probability $p_n$, $n$ is the number of vertices, such that $\lim_{n \to \infty}p_n n= \infty$. The purpose of this note is twofold: (1) to establish this proximity on finite time horizon, by exploiting the fact that both systems are accurately described by a Fokker-Planck PDE (or, equivalently, by a nonlinear diffusion process) in the $n=\infty$ limit;  (2)   to remark that in reality this result is  unsatisfactory when it comes to applying it to systems with $n$  large but finite, for example the values of $n$ that can be reached in simulations or that correspond to the typical number of interacting units in a biological system. 
 \\[10pt]
  2010 \textit{Mathematics Subject Classification: } 82C20, 60K35
  \\[10pt]
  \textit{Keywords: interacting diffusions on graphs, mean field, nonlinear diffusion, Fokker-Planck PDE, Kuramoto models}
\end{abstract}

\date{\today}

\maketitle


\section{Introduction, results, discussion}
\label{sec:meanfieldlimit}

\subsection{The model}
For $n=2,3, \ldots$, choose an {\sl adjacency} matrix $\xi=\{\xi_{ i,j}^{ (n)}\}_{ (i, j)\in\{1, \ldots, n\}^2}$, that is  $\xi_{ i,j}^{ (n)}\in\{0, 1\}$. Consider the graph $G^{ (n)}=(V^{(n)}, E^{ (n)})$ associated with this sequence, namely $V^{ (n)}:=\{1, \ldots, n\}$ and $E^{ (n)}= \{(i,j)\in V^{ (n)}\times V^{ (n)}:\,  \xi_{ i, j}^{ (n)}=1\}$. 
For $i\in V^{ (n)}$, define also the degree of the vertex $i$ as $d_{ i}^{ (n)}:=\sum_{j=1}^n \xi_{ i,j}^{ (n)}$.
Consider the $n$-dimensional diffusion 
\begin{equation}
\label{eq:odegene}
\qquad \dd\theta_{i, t}\, =\, F (\theta_{i, t}, \omega_{i})\dd t + \frac{\alpha_{ n}}{n}\sum_{j=1}^{ n} \xi_{ i, j}^{ (n)}\Gamma\left(\theta_{i, t}, \omega_{i}, \theta_{j, t}, \omega_{j} \right) \dd t + \sigma\dd B_{i, t}\, ,
\end{equation}
where $\{B_{i, \cdot}\}_{i=1,\ldots, n}$ is a sequence of standard IID Brownian motions with respect to a given filtration $\{\cF_t\}_{t\ge 0}$, $\{\go_i\}_{ i=1, \ldots, n}$ is a sequence of elements of a (complete and separable) metric space (seen as a disorder), $ \alpha_{ n}\ge 1$ is a positive number that we introduce to properly normalize the interaction, as it will be clear from the examples. In \eqref{eq:odegene}, $\gs \ge 0$ (for $\gs=0$, the diffusion is degenerate and \eqref{eq:odegene} is a  $n$-dimensional deterministic dynamical system).
We assume also that: 
\smallskip

\begin{itemize}
\item
$F:\bbR^2 \to \bbR$ is bounded and $F(\cdot, \go)$ is Lipschitz uniformly in $\go$.  
\item 
$\Gamma :\bbR^4 \to \bbR$ is bounded and $\Gamma(\cdot, \go, \cdot, \go')$ is Lipschitz uniformly in $\go$ and $\go'$. 
\end{itemize}

\smallskip

These conditions are sufficient to ensure uniqueness of a strong solution once we have fixed the initial conditions 
$\{\theta_{i, 0}\}_{i=1, \ldots, n}$ that are $n$ square integrable and $\cF_0$-measurable random variables. We denote by
$\bE[\, \cdot\,]$ the expectation with respect to the Brownian motions $\{B_{ i, \cdot}\}_{ i=1, 2,\ldots}$.  
We will 
 choose $\{( \theta_{j,0}, \go_j)\}_{j=1, 2, \ldots}$ to be (the realization of) a sequence of IID random vectors with common law denoted by $\nu_0$ and we  denote by
 $ \bbE\left[\cdot\right]$ the expectation w.r.t. to these variables. 
 Finally, also the graph itself will be random at times and
  $ \mathbb{ E}_{  \xi} \left(\cdot\right)$ is the corresponding expectation: the graph is independent of all other randomness in the system, that is disorder $\go$, initial condition and Brownians. A last remark is that the results are easily generalized to triangular arrays, that is assuming independence for 
  $\{(\theta_{j,0}, \go_j\}_{j=1, \ldots, n}$ with common law that may depend on $n$ and converges (weakly) to $\nu_0$ for $n \to \infty$: we will not do this to keep things simpler.
  
\subsection{A class of examples: Kuramoto model and variants}
The well known Kuramoto model \cite{cf:K}, even in various  generalized  versions, falls into the framework of \eqref{eq:odegene}. The original formulation is with   $G^{(n)}$ the complete graph and  $\ga_n=1$ for every $n$. Said otherwise, the original, or basic, Kuramoto model is just a mean field model. Of course a  more general choice of $G^{(n)}$ has an obvious meaning (and modeling relevance! In the Kuramoto context this issue has been considered from a numerical viewpoint in \cite{cf:torcini}, see also \cite[Sec.~IV.B]{cf:acebron}  and \cite{cf:VMP} for other types of graph disorder) and it is the point of this note, but let us stick to the classical framework  for now: 
\smallskip

\begin{itemize}
\item  If $\go$ is a real number, $F(\theta, \go)=\go$ and $\Gamma(\theta, \go, \theta',\go')=-K \sin(\theta-\theta')$, $K\ge 0$, then we are dealing with  the standard (stochastic) 
Kuramoto model \cite{cf:K,cf:acebron}. In this case we can view  the variables $\{\theta_{ i}\}_{ i=1,\ldots, n}$ as phases, that is we can map them to $\bbS :=\bbR/(2 \pi \bbZ)$, and the process
becomes a system of coupled diffusions on  $\bbS^n$. Since we require $F$ to be bounded, our results apply only to the case of bounded random variables $\go_i$.
\item $F(\theta, \go)=1-a \sin(\theta)$, $a \in \bbR$, and $\Gamma(\theta, \go, \theta',\go')=-K \sin(\theta-\theta')$, $K\ge 0$ is the 
active rotator model \cite{cf:KSS,cf:shinomoto1986a}. Note that in this case $\go$ does not enter, but one can  choose $a$ dependent on $\go$  and one would be dealing with a family of {\sl very} inhomogeneous interacting diffusion \cite{cf:GLP}. 
\item The generalization of the Kuramoto model presented for example in \cite{cf:VMP}
fits inside this framework with the choice
\begin{equation}
F(\theta, \go)= \eta\   \text{ and } \  \gG(\theta, \go, \theta',\go')=- A A^{ \prime} K \sin(\theta-\ga -\theta'+\ga'-\gd)\, ,
\end{equation}
where $\go=(\eta, A,  \ga )\in \bbR ^2\times \bbS$, and $\gd \in \bbR$. This a priori rather involved choice  
has the peculiarity that it retains the solvable character of the original Kuramoto model, if suitable
independence hypotheses are made among the components of the random vector $\go$. Again, to satisfy the boundedness assumptions, we restrict to the case in which the law of the first three component of $\go$ has bounded support.
\end{itemize}

\subsection{The nonlinear diffusion and the Fokker-Planck equation}
We introduce also the nonlinear diffusion  (degenerate, if $\gs=0$)
$\{\bar \theta^\go_t\}_{t\ge 0}$ that solves
\begin{equation}
\label{eq:theta_nonlin}
\qquad \dd\bar\theta^{ \omega}_{t}\,=\,  F\left(\bar \theta^{ \omega}_{ t}, \go\right)\dd t +  p \int \Gamma\left(\bar\theta_{ t}^{ \omega}, \omega, \tilde\theta, \tilde \omega\right) \nu_{ t}(\dd \tilde\theta, \dd \tilde \omega) \dd t + \sigma\dd B_{t}\, ,
\end{equation}
with initial condition which is a square integrable random variable. 
Moreover, in \eqref{eq:theta_nonlin}, $p\in(0, 1]$ is a parameter to be chosen appropriately (see Theorem~\ref{prop:prop_chaos_sparse} below) and $\go$ is a random variable also, with $(\bar\theta^{ \omega}_{0}, \go)$ that is distributed like  the $(\theta_{j,0,}\go_j)$ variables of the main model -- hence the law of this couple is $\nu_0$ --  and $(\bar\theta^{ \omega}_{0}, \go)$ is independent of $B_\cdot$. 
For $t>0$, $ \nu_{ t}$ is the law of $ (\bar\theta_{ t}^{ \omega}, \go)$. Existence and uniqueness for this problem are treated in \cite{cf:L} (see also \cite{cf:Gartner,cf:lancellotti,cf:S}).

The link between \eqref{eq:odegene} and 
\eqref{eq:theta_nonlin} is made by considering
$\{\bar \theta^{\go_i}_{i,t}\}_{t\ge 0}$
for $i=1,\ldots, n$ defined by setting $\bar \theta^{\go_i}_{i,t}$
equal to $\bar \theta^{\go_i}_{t}$ that solves \eqref{eq:theta_nonlin}  with the same initial condition
$\theta_{i,0}$ as for \eqref{eq:odegene}.
Note that $\{\bar \theta^{\go_i}_{i,\cdot}\}_{i=1,\ldots, n}$ is a sequence of independent processes 
with identical distribution. We will see that, under suitable conditions, 
$\{ \theta_{i,\cdot}\}_{i=1,\ldots, n}$ and 
$\{\bar \theta^{\go_i}_{i,\cdot}\}_{i=1,\ldots, n}$ stay close for {\sl arbitrarily large} positive times, for $n \to \infty$.

It is well known \cite{cf:dPdH,cf:L,cf:Gartner,cf:S} that $\nu_t$ solves (the weak form of) a Fokker-Planck (F-P) PDE.
It is definitely more transparent  to write this evolution when the  solution is classical, so let us assume that
$\nu_{ t}(\dd\theta, \dd \go)$ can be written as $q_t(\theta, \go) \dd \theta \nu_{\textrm{dis}}(\dd \go)$ where $\nu_{\textrm{dis}}$
is the common distribution of the $\go_j$ variables: of course, since there is no dynamics in the $\go$ variables, the marginal law
 $\int_\bbR \nu_{ t}(\dd \theta, \cdot)$ is independent of $t$ and it coincides with $\nu_{\textrm{dis}}$. When $ \sigma>0$, the parabolic character 
of the problem does ensure existence and regularity of $q_t(\theta, \go)$ solution to the F-P PDE \eqref{eq:FP} for positive times:
\begin{multline}
\label{eq:FP}
\partial_t q_t(\theta, \go)\, =\\ \frac{\gs^2}2 \partial_\theta^2 q_t(\theta, \go) - \partial_\theta \left[ q_t(\theta, \go) F(\theta, \go)\right]-
\partial_\theta \left[ q_t(\theta, \go) \int p\Gamma( \theta, \go, \theta', \go') q_t(\theta', \go') \dd \theta' \nu_{\textrm{dis}}(\dd \go') \right]\, .
\end{multline}
The same expression remains valid when $ \sigma=0$ if one chooses a sufficiently regular initial condition for \eqref{eq:FP}.
\subsection{The main result}
\label{sec:main}

In our main result $\xi$ is not random. 
We couple the two processes $ \theta_{i,\cdot}$ and $\bar \theta^{\go_i}_{i,\cdot}$ by using the same Brownian motion and we recall that we have chosen  $\bar \theta^{\go_i}_{i,0}= \theta_{i,0}$ for every $i$ and that 
$\{ (\theta_{i,0}, \go_i)\}_{i=1,2 , \ldots}$ is IID.  For conciseness we write  $\bar \theta_{i,t}$ for $\bar \theta^{\go_i}_{i,t}$.

\medskip
\begin{theorem}
\label{prop:prop_chaos_sparse}
Suppose that there exists $p\in (0, 1]$ such that 
\begin{equation}
\label{eq:deg-cond}
 b_n\,= \, b_n(\xi)\, :=\, \sup_{ i\in\{1, \ldots, n\}} \left\vert\frac{ \alpha_{ n}}{ n} \sum_{ k=1}^{ n} \left( \xi^{(n)}_{ i, k} - \frac{ p}{ \alpha_{ n}}\right)\right\vert\,=\,
 \sup_{ i\in\{1, \ldots, n\}} \left\vert
\ga_n \frac{d_i^{(n)}}n -p \right \vert\, 
  \stackrel{n \to \infty} \longrightarrow 0\, .
\end{equation}
Then there exist  $C=C(F, \Gamma)>0$ and $n_0\in \bbN$ (explicit, and depending only on $\{b_n\}_{n=2,\ldots}$ and $p$) such that for every $n >n_0$ and every $t\ge 0$
\begin{equation}
\label{eq:u}
\sup_{ i\in \{1, \ldots,n\}} \bbE \mathbf{ E} \left[ \sup_{ s \in [0, t]} \left\vert \theta_{ i, s} -\bar \theta_{ i, s} \right\vert^{ 2}\right]\, \leq \,  \left( \frac{\ga_n}n + b_n^2 \right) \left(\exp\left( Ct\right)-1 \right)\, .
\end{equation}
\end{theorem}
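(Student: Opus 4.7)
The plan is to exploit the synchronous coupling that the statement already enforces (same Brownian motion $B_{i,\cdot}$ and same initial datum $\theta_{i,0}=\bar\theta_{i,0}$): the stochastic integrals cancel in the difference, so $t\mapsto X_{i,t}:=(\theta_{i,t}-\bar\theta_{i,t})^{2}$ is absolutely continuous with no It\^o correction. I would split the interaction mismatch as $A_{i}(s)+B_{i}(s)+C_{i}(s)$, where $A_{i}(s)$ is the Lipschitz piece obtained by replacing $(\theta_{i,s},\theta_{j,s})$ by $(\bar\theta_{i,s},\bar\theta_{j,s})$ inside $\Gamma$, the graph-defect piece $B_{i}(s):=\frac{1}{n}\sum_{j}(\alpha_{n}\xi^{(n)}_{i,j}-p)\Gamma(\bar\theta_{i,s},\omega_{i},\bar\theta_{j,s},\omega_{j})$ compares local graph averaging with uniform averaging at density $p/\alpha_{n}$, and the propagation-of-chaos piece $C_{i}(s):=\frac{p}{n}\sum_{j}\Gamma(\bar\theta_{i,s},\omega_{i},\bar\theta_{j,s},\omega_{j})-p\int\Gamma(\bar\theta_{i,s},\omega_{i},\tilde\theta,\tilde\omega)\,\nu_{s}(\dd\tilde\theta,\dd\tilde\omega)$ compares the empirical mean over the i.i.d.\ nonlinear processes with $\nu_{s}$. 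Applying the chain rule to $X_{i,t}$, Lipschitzianity of $F$ and $\Gamma$, the elementary bounds $2ab\leq a^{2}+b^{2}$ and $2|\theta_{i}-\bar\theta_{i}||\theta_{j}-\bar\theta_{j}|\leq X_{i,s}+X_{j,s}$, and the hypothesis $\alpha_{n}d_{i}^{(n)}/n\leq p+b_{n}$ (bounded by a constant once $b_{n}\leq 1$), I would obtain the pathwise inequality
\begin{equation*}
X_{i,t}\;\leq\; C_{1}\int_{0}^{t}X_{i,s}\dd s \;+\; C_{1}\int_{0}^{t}\frac{\alpha_{n}}{n}\sum_{j=1}^{n}\xi^{(n)}_{i,j}X_{j,s}\dd s \;+\; \int_{0}^{t}\bigl(B_{i}(s)^{2}+C_{i}(s)^{2}\bigr)\dd s,
\end{equation*}
whose right-hand side is non-decreasing in $t$. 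This monotonicity -- a by-product of the absent It\^o correction -- is precisely what allows to upgrade the left-hand side to $\sup_{s\leq t}X_{i,s}$ for free.

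The technical heart, and the main obstacle, is the second-moment bound $\bbE\mathbf{E}[B_{i}(s)^{2}]\leq C(\alpha_{n}/n+b_{n}^{2})$: the assumption on $b_{n}$ controls only a \emph{first}-moment defect of the row sums of $\xi^{(n)}$, whereas individual weights $\alpha_{n}\xi^{(n)}_{i,j}-p$ may be of order $\alpha_{n}$. The cancellation is restored by conditioning on $(\bar\theta_{i,s},\omega_{i})$: then the family $\{\Gamma(\bar\theta_{i,s},\omega_{i},\bar\theta_{j,s},\omega_{j})\}_{j\neq i}$ is i.i.d.\ with conditional mean $H(\bar\theta_{i,s},\omega_{i}):=\int\Gamma(\bar\theta_{i,s},\omega_{i},\tilde\theta,\tilde\omega)\,\nu_{s}(\dd\tilde\theta,\dd\tilde\omega)$. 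Splitting $B_{i}(s)=H(\bar\theta_{i,s},\omega_{i})\cdot\frac{1}{n}\sum_{j}(\alpha_{n}\xi^{(n)}_{i,j}-p)$ plus a conditionally centered remainder, the first term is deterministically at most $\|\Gamma\|_{\infty}b_{n}$, while the conditional variance of the centered piece is $\leq\frac{4\|\Gamma\|_{\infty}^{2}}{n^{2}}\sum_{j}(\alpha_{n}\xi^{(n)}_{i,j}-p)^{2}\leq\frac{4\|\Gamma\|_{\infty}^{2}}{n^{2}}(\alpha_{n}^{2}d_{i}^{(n)}+np^{2})=O(\alpha_{n}/n)$, using $\alpha_{n}d_{i}^{(n)}/n\leq p+b_{n}$. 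The analogous but simpler empirical-versus-true-mean computation along the i.i.d.\ copies yields $\bbE\mathbf{E}[C_{i}(s)^{2}]\leq C/n$.

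Finally, taking $\bbE\mathbf{E}$ of the pathwise inequality, using $X_{j,s}\leq\sup_{r\leq s}X_{j,r}$ and $\sup_{i}\alpha_{n}d_{i}^{(n)}/n\leq p+b_{n}$, and then taking $\sup_{i}$, the function $h(t):=\sup_{i}\bbE\mathbf{E}[\sup_{s\leq t}X_{i,s}]$ satisfies the affine integral inequality
\begin{equation*}
h(t)\;\leq\;C_{3}\int_{0}^{t}h(s)\dd s \;+\; C_{2}\,t\,\Bigl(\tfrac{\alpha_{n}}{n}+b_{n}^{2}\Bigr).
\end{equation*}
The integral form of Gr\"onwall with the affine source $C_{2}tK_{0}$ (setting $K_{0}:=\alpha_{n}/n+b_{n}^{2}$) gives, after one explicit integration by parts, $h(t)\leq (C_{2}/C_{3})K_{0}(e^{C_{3}t}-1)$; enlarging $C_{3}$ to $C:=\lceil C_{2}/C_{3}\rceil C_{3}$ and using the factorization $e^{kC_{3}t}-1=(e^{C_{3}t}-1)\sum_{\ell=0}^{k-1}e^{\ell C_{3}t}\geq k(e^{C_{3}t}-1)$ absorbs the prefactor into $(e^{Ct}-1)$, yielding the announced bound. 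The threshold $n_{0}$ is determined by requiring $b_{n}\leq 1$ (or $\leq p$) for $n\geq n_{0}$, which is what controls both the coefficients in $C_{1},C_{3}$ and the bound on $\sum_{j}(\alpha_{n}\xi^{(n)}_{i,j}-p)^{2}$, and explains the dependence of $n_{0}$ on $\{b_{n}\}$ and on $p$.
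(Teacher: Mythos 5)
Your argument is correct and follows essentially the same route as the paper: cancel the It\^o term via the synchronous coupling, decompose the interaction mismatch, bound the Lipschitz pieces, use conditional independence of the $(\bar\theta_j,\go_j)$ to get an $O(\ga_n/n)$ variance, extract a $b_n^2$ from the degree defect, and close with an affine Gr\"onwall whose monotone right-hand side promotes the estimate to a supremum for free. The only bookkeeping difference is the order of telescoping in $\frac{\ga_n}{n}\sum_k\xi_{i,k}\gG(\bar\theta_i,\go_i,\bar\theta_k,\go_k)-p\int\gG(\bar\theta_i,\go_i,\cdot)\,\dd\nu_s$: you fix the graph weights first (your $B_i$) and then pass from the empirical to the $\nu_s$-average (your $C_i$), which forces the secondary split of $B_i$ into the systematic bias $H\cdot\frac{1}{n}\sum_j(\ga_n\xi_{i,j}-p)$ plus a conditionally centered remainder; the paper performs the two replacements in the opposite order (their $B^{(3)}$ replaces $\gG$ by its conditional mean keeping the weights $\xi_{i,k}$, handled by vanishing of cross terms, and their $B^{(4)}$ is the pure degree defect times the bounded conditional mean), which merges your centered remainder with $C_i$ into a single term and reads off the $b_n^2$ deterministically. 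Both orderings give the same final estimate.
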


\medskip

This result is saying that, under the condition \eqref{eq:deg-cond} and under the assumptions we have made on the initial conditions,  each component of the $n$ dimensional diffusion \eqref{eq:odegene} is very close to the nonlinear diffusion process \eqref{eq:theta_nonlin} 
uniformly up to times that can even diverge (slowly)  with $n$. In order to understand the role of $\ga_n$ let us stress from now that 
we naturally distinguish two cases among the graphs we consider:
\smallskip 
\begin{enumerate}
\item The case of a graph $G^{ (n)}$ with {\sl positive edge density}: by this we mean that 
\begin{equation}
\sup_{ i=1, \ldots, n} \left\vert \frac{d_i^{(n)}} n - p \right\vert \stackrel{ n\to\infty}\longrightarrow 0\, ,
\end{equation} 
for some $p\in(0, 1]$. In this case, we just set $\ga_n=1$ for every $n$. The case of the complete graph fits into this framework for $p=1$.
\item The case of a graph $G^{ (n)}$ with {\sl vanishing edge density}, that is $d_i^{(n)}/n \stackrel{ n\to\infty}\longrightarrow 0$. In this case, we require that for a suitable choice of the sequence $\{\ga_n\}_{n=2, 3 , \ldots}$  we have that, for some $p>0$,
\begin{equation}
 \sup_{ i=1, \ldots, n}\left\vert \frac{\ga_n d_i^{(n)}}n -p \right\vert\stackrel{ n\to\infty}\longrightarrow 0\,.
 \end{equation}
It is therefore clear that in this case the value of $p$ is  completely conventional and we can and will choose it equal to $1$ in all vanishing edge density cases. Obviously, the only graphs $G^{ (n)}$ for which \eqref{eq:u} is meaningful are such that $ \frac{ \alpha_{ n}}{ n} \to 0$, as $n\to\infty$ (that is $d_{ n}^{ (i)}\to\infty$, uniformly in $i$).
\end{enumerate}

\smallskip

Therefore we can apply Theorem~\ref{prop:prop_chaos_sparse} to the case of complete graphs, so $b_n=0$ and $\ga_n=1$ for every $n$, and  the left-hand side in \eqref{eq:u} tends to zero when $t\leq c \log n$, for every choice of $c < 1/C$. 
As we will see in the next section, this is true, with a suitable choice of $c$ and  in an almost sure sense, for positive edge density Erd\H{o}s-R\'enyi graphs and even for {\sl most of} the vanishing edge density Erd\H{o}s-R\'enyi  graphs. 

\medskip 

 Theorem~\ref{prop:prop_chaos_sparse} directly implies some estimates on the empirical measure of the system.
 The empirical measure is
 \begin{equation}
 \label{eq:emp}
 \mu_{n,t}(\dd \theta, \dd \go)\, :=\, \frac 1n \sum_{i=1}^n \gd_{\theta_{ i,t}, \go_i}(\dd \theta, \dd \go)\,.
 \end{equation}
It is straightforward to see that  Theorem~\ref{prop:prop_chaos_sparse} implies 
the convergence in law of $\mu_{n,\cdot}$ to $ \nu_{\cdot}$ (that solves \eqref{eq:FP}), when $\mu_{n,\cdot}$ and $ \nu_{\cdot}$ are seen as (random) continuous functions
from $[0, T]$ (any $T>0$) to the set of probability measures. But we state more precisely a result that is more in the spirit of this note.
Consider the bounded Lipschitz distance between two probability measures defined on the same space metric space $(M, d)$
\begin{equation}
\label{eq:bL} 
d_{\mathrm{bL}}(\mu, \nu)\, :=\, \sup_{H \in \cL } \left \vert \int H \dd \mu - \int H \dd \nu \right\vert \, ,
\end{equation}
where $\cL$ is the set of $H:M \to [0,1]$ such that $\vert H(x)-H(y)\vert \leq d(x,y)$.

\medskip

\begin{corollary}
\label{th:emp}
Under the same assumptions of Theorem~\ref{prop:prop_chaos_sparse} and with the same $C$ as in that statement, if $\{t_n\}_{n=2, 3, \ldots}$
is such that 
$( (\ga_n/n) + b_n^2 ) \exp( Ct_n) = o(1)$  we have 
\begin{equation}
\lim_{n \to \infty} \bbE \bE \sup_{t \in [0, t_n]} d_{\mathrm{bL}}(\mu_{n, t}, \nu_t)\, =\, 0\,.
\end{equation}
\end{corollary}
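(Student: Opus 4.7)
The plan is to decompose via the triangle inequality. Setting $\bar\mu_{n,t} := \frac{1}{n} \sum_{i=1}^n \gd_{(\bar\theta_{i,t}, \go_i)}$ for the empirical measure of the coupled non-linear diffusions, one uses
\begin{equation*}
d_{\mathrm{bL}}(\mu_{n,t}, \nu_t) \leq d_{\mathrm{bL}}(\mu_{n,t}, \bar\mu_{n,t}) + d_{\mathrm{bL}}(\bar\mu_{n,t}, \nu_t)\, .
\end{equation*}
The first (graph-dependent) term is controlled by Theorem~\ref{prop:prop_chaos_sparse}: since every $H \in \cL$ is $1$-Lipschitz, $\sup_{t \in [0,t_n]} d_{\mathrm{bL}}(\mu_{n,t}, \bar\mu_{n,t}) \leq \frac{1}{n} \sum_{i=1}^n \sup_{t \in [0,t_n]} |\theta_{i,t} - \bar\theta_{i,t}|$, and applying $\bbE \bE$ together with Cauchy--Schwarz and \eqref{eq:u} yields
\begin{equation*}
\bbE \bE \sup_{t \in [0, t_n]} d_{\mathrm{bL}}(\mu_{n,t}, \bar\mu_{n,t}) \leq \sqrt{\Bigl(\frac{\ga_n}{n} + b_n^2\Bigr)\bigl(e^{Ct_n}-1\bigr)}\, ,
\end{equation*}
which is $o(1)$ by the very hypothesis on $t_n$.

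The second (IID) term is independent of the graph, since the $\bar\theta_{i,\cdot}$ are IID with common marginal $\nu_t$ at each time $t$. The cleanest way to handle the supremum in $t$ is to lift the problem to path space: set $\bar\pi_n := \frac{1}{n} \sum_{i=1}^n \gd_{(\bar\theta_{i, \cdot}, \go_i)}$ on the Polish space $C([0, t_n], \bbR) \times \Omega_\omega$ equipped with the sup distance, and let $\Pi_n$ be the common law of $(\bar\theta^\go_\cdot, \go)$ on that space. The evaluation map $(w, \go) \mapsto (w(t), \go)$ is $1$-Lipschitz for every $t \in [0, t_n]$, hence the composition $H \circ \mathrm{eval}_t$ is a $[0,1]$-valued $1$-Lipschitz function on path space whenever $H \in \cL$, so
\begin{equation*}
\sup_{t \in [0, t_n]} d_{\mathrm{bL}}(\bar\mu_{n,t}, \nu_t) \leq d_{\mathrm{bL}}(\bar\pi_n, \Pi_n)\, .
\end{equation*}
For each fixed $T$, $\bbE \bE\, d_{\mathrm{bL}}(\bar\pi_n, \Pi_n) \to 0$ is Varadarajan's theorem applied to the IID path-valued variables $(\bar\theta_{i, \cdot}, \go_i)$ (combined with dominated convergence, as $d_{\mathrm{bL}} \leq 1$).

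The hard part will be the uniformity in time when $t_n \to \infty$: the hypothesis allows $t_n$ to be of order $\log n$, e.g.\ for the complete graph where $\ga_n = 1$ and $b_n = 0$. To absorb this growth, one must exploit the boundedness of $F$ and $\Gamma$ and the constancy of $\gs$, which give polynomial-in-$t$ moment bounds on $\sup_{s \leq t_n} |\bar\theta^\go_s|$; combined with a Kolmogorov modulus-of-continuity estimate for the IID paths and a discretization of $[0, t_n]$ on a polynomial grid, these yield a quantitative rate of the form $\bbE \bE\, d_{\mathrm{bL}}(\bar\pi_n, \Pi_n) = O(n^{-\kappa}\, \mathrm{poly}(t_n))$ for some $\kappa > 0$. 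This is still $o(1)$ because the assumption forces $t_n$ to grow at most polylogarithmically in $n$, and the corollary follows.
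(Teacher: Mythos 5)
Your decomposition by the triangle inequality and your treatment of the graph-dependent term $d_{\mathrm{bL}}(\mu_{n,t},\bar\mu_{n,t})$ via the $1$-Lipschitz bound plus Cauchy--Schwarz and \eqref{eq:u} are exactly what the paper does (the paper phrases it as $A^{(1)}_{n,t}$ after first testing against a single $H\in\cL$, but the estimate is identical). The divergence, and the gap, is in the second (IID) term. You lift to path space and reduce to bounding $\bbE\bE\,d_{\mathrm{bL}}(\bar\pi_n,\Pi_n)$ on $C([0,t_n])\times\Omega_\omega$, correctly observing $\sup_{t\le t_n}d_{\mathrm{bL}}(\bar\mu_{n,t},\nu_t)\le d_{\mathrm{bL}}(\bar\pi_n,\Pi_n)$. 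But the claim that this quantity is $O(n^{-\kappa}\,\mathrm{poly}(t_n))$ is precisely the hard part, and your sketch does not establish it. The ``discretize on a polynomial grid and project'' route runs straight into the curse of dimensionality: if you reduce paths to their values on $m_n$ grid points, the empirical-measure convergence rate in $m_n$ dimensions is of order $n^{-1/m_n}$, which does \emph{not} go to zero when $m_n$ is any power of $n$. A genuine polynomial-in-$n$ rate for empirical measures in bounded-Lipschitz distance over path space requires a real metric-entropy/small-ball argument for the law of $(\bar\theta^\go_\cdot,\go)$, with careful tracking of how the entropy and the localization radius depend on $t_n$ --- none of which is standard or sketched.

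The paper sidesteps all of this with one extra reduction that your proposal omits: by \cite[proof of Thm.~11.3.3]{cf:dudley}, for each $\gep>0$ there is a \emph{finite} set $\cL_\gep\subset\cL$ with $d_{\mathrm{bL}}(\mu,\nu)\le\gep+\max_{H\in\cL_\gep}|\int H\,\dd\mu-\int H\,\dd\nu|$. This replaces the supremum over all of $\cL$ (equivalently the path-space empirical-measure problem) by a maximum over finitely many fixed $H$. For a single $H$, the term $\frac1n\sum_i X_{i,t}$ with $X_{i,t}=H(\bar\theta_{i,t},\go_i)-\bbE\bE H(\bar\theta_{i,t},\go_i)$ is a centered, bounded IID average at each fixed $t$, hence $\bbE\bE|A^{(2)}_{n,t}|\le n^{-1/2}$; discretizing $[0,t_n]$ with step $n^{-1/4}$ gives at most $O(n^{1/4}\log n)$ grid points, and between grid points one uses the boundedness of $F,\Gamma$ and Doob's inequality for the Brownian increments to get a Gaussian tail $\exp(-c\gd^2 n^{1/4})$, which beats the union bound over the grid. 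This is entirely elementary and needs no path-space empirical-process theory. You should adopt this finite-$\cL_\gep$ reduction; with it, your discretization plan becomes a 1D concentration argument, which is what the paper actually carries out, rather than an infinite-dimensional rate claim that your sketch does not justify.
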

\medskip

To resume, the content of Theorem~\ref{prop:prop_chaos_sparse} and Corollary~\ref{th:emp} can be viewed as twofold:
under degree conditions on $G^{(n)}$
\begin{enumerate}
\item dynamical models on $G^{(n)}$ are faithfully described for $n\to \infty$ by nonlinear diffusions or, equivalently, by F-P PDEs;
\item dynamical models on $G^{(n)}$ have a behavior that is close to the one of the same models on complete graphs, up to suitable rescaling of the interaction term. 
\end{enumerate}

\subsection{Examples I: mean field Fokker-Planck PDEs faithfully describe $n\to \infty$ systems}
\label{sec:examples}

\subsubsection{A family of graphs that includes Erd\H{o}s-R\'enyi graphs}  
Suppose here that, under $ \mathbb{ P}_{ \xi}$, for every vertex $i=1, \ldots, n$, the vector $(\xi_{i, 1}^{ (n)}, \ldots, \xi_{i, n}^{ (n)})$ is made of independent Bernoulli random variables with parameter $q_{ n}\in[0, 1]$. 

\medskip
\begin{proposition}
\label{prop:ER}
Under the above assumptions, 
$\lim_{n \to \infty} b_n(\xi)=0$
for $ \mathbb{ P}_{ \xi}$-almost every realizations of the graph $G^{ (n)}$
\begin{enumerate}
\item in the positive density case, that is when $q_n \stackrel{n\to \infty} \longrightarrow p \in (0,1]$.
\item in the zero density case, that is when $q_n \stackrel{n\to \infty} \longrightarrow 0$ and $ \alpha_{ n}:=1/q_n$, under the assumption that
\begin{equation}
\label{eq:cond_alpha_n}
\alpha_{ n}= o \left( \frac{ n}{ \log(n)}\right), \text{ as } n\to\infty.
\end{equation}
\end{enumerate}
\end{proposition}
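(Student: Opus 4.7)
The plan is to rewrite $b_n(\xi)$ explicitly in each of the two cases and then to obtain, for fixed $\varepsilon>0$, a summable (in $n$) bound on $\mathbb{P}_\xi\bigl(b_n(\xi)>\varepsilon\bigr)$, after which the Borel--Cantelli lemma yields the a.s.\ convergence to $0$.

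In the positive-density case ($\alpha_n=1$, $q_n\to p\in(0,1]$), we have
\[
b_n(\xi)\,=\,\sup_{i}\left\vert\frac{1}{n}\sum_{k=1}^n\xi_{i,k}^{(n)}-p\right\vert\,\leq\,\sup_{i}\left\vert\frac{1}{n}\sum_{k=1}^n\bigl(\xi_{i,k}^{(n)}-q_n\bigr)\right\vert+|q_n-p|.
\]
For each fixed $i$, the inner sum is a centered sum of independent bounded ($\pm 1$-valued up to constants) random variables, so Hoeffding's inequality gives
\[
\mathbb{P}_\xi\left(\left\vert\tfrac{1}{n}\sum_{k}(\xi_{i,k}^{(n)}-q_n)\right\vert>\tfrac{\varepsilon}{2}\right)\,\leq\,2\exp(-c\,n\varepsilon^2),
\]
for some absolute constant $c>0$. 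A union bound over $i\in\{1,\ldots,n\}$ yields a bound of the form $2n\exp(-cn\varepsilon^2)$, which is clearly summable. Since $q_n\to p$, this takes care of the case $b_n\to 0$ a.s.

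In the zero-density case we have $\alpha_n=1/q_n$ (with the convention $p=1$), and
\[
b_n(\xi)\,=\,\sup_i\left\vert\frac{1}{nq_n}\sum_{k=1}^n\xi_{i,k}^{(n)}-1\right\vert,
\]
so the right tool is the multiplicative (Chernoff) form of Bernstein's inequality for Bernoulli sums: for each $i$ and every $\varepsilon\in(0,1)$,
\[
\mathbb{P}_\xi\left(\left\vert\tfrac{1}{nq_n}\sum_{k}\xi_{i,k}^{(n)}-1\right\vert>\varepsilon\right)\,\leq\,2\exp\bigl(-c_\varepsilon\, n q_n\bigr),
\]
with $c_\varepsilon>0$ depending only on $\varepsilon$. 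Here is precisely where the assumption \eqref{eq:cond_alpha_n} enters: $\alpha_n=o(n/\log n)$ is equivalent to $nq_n/\log n\to\infty$, so for $n$ large enough $nq_n\geq K_\varepsilon\log n$ with $K_\varepsilon$ as large as we wish. A union bound over $i$ then gives
\[
\mathbb{P}_\xi\bigl(b_n(\xi)>\varepsilon\bigr)\,\leq\,2n\exp(-c_\varepsilon n q_n)\,\leq\,2\,n^{\,1-c_\varepsilon K_\varepsilon},
\]
which is summable as soon as $K_\varepsilon$ is chosen so that $c_\varepsilon K_\varepsilon>2$. Borel--Cantelli then gives $\limsup_n b_n(\xi)\leq\varepsilon$ a.s., and letting $\varepsilon\downarrow 0$ along a countable sequence concludes.

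The only delicate point is the sparse case: a purely Hoeffding-type bound would give $\exp(-c n\varepsilon^2 q_n^2)$, which is not summable under \eqref{eq:cond_alpha_n}; the multiplicative Chernoff bound is essential because it exploits the Bernoulli variance $q_n(1-q_n)$ rather than the crude range. Everything else is a routine union bound plus Borel--Cantelli, and the threshold $n/\log n$ is visible as exactly the logarithmic savings one needs to beat the union bound over the $n$ vertices.
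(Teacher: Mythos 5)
Your proof is correct and follows essentially the same strategy as the paper: per-vertex tail bounds, a union bound over the $n$ vertices, and Borel--Cantelli, with the decisive observation that the sparse case requires a variance-sensitive (multiplicative Chernoff/Bernstein) bound rather than a range-based Hoeffding bound. The paper packages both cases into a single Chernoff bound via relative entropy, then establishes $D((1+\varepsilon)q\|q)\geq c_q\varepsilon^2$ with $c_q\geq q/4$ for small $q$ --- which is precisely the multiplicative Chernoff estimate you invoke by name --- so the two arguments are the same in substance, and your remark about why Hoeffding fails in the sparse regime accurately identifies the point the paper's lower bound on $D$ is designed to handle.
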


\medskip

\begin{proof}[Proof of Proposition~\ref{prop:ER}]
Under the assumptions of Proposition~\ref{prop:ER}, the degree $d_i=d_i^{(n)}\sim Bin(n,q_n)$. Let us just recall the standard bound that follows from the Markov inequality: if $X_n$ is a binomial random variable of parameters $q$ and $n$ we have that for every $\gep\ge 0$ such that $q+\gep<1$
\begin{equation}
\label{eq:Bnp}
\bbP\left( X_n \, >\, (q+\gep) n \right) \, \leq \, 
\exp\left(-n \, D(q+ \gep\Vert q)\right)\, ,
\end{equation}
where
\begin{equation}
D(x \Vert y)\, := \, x\ln \left( \frac{ x}{ y}\right) + (1-x) \ln \left(\frac{ 1-x}{ 1-y}\right)\, ,
\end{equation}
for $x, y \in(0,1)$. Moreover \eqref{eq:Bnp} holds also for $\gep \leq 0$ if we reverse the inequality in
 $X_n  > (q+\gep) n$. By using the fact that the second derivative with respect to $\gep$ of
 $D((1+ \gep)q\Vert q)$, with $q \in (0,1)$ and $\gep $ such that $(1+ \gep)q\in (0,1)$, is
 $q/((1+\gep)(1-q(1+ \gep)))$, we see that $D((1+ \gep)q\Vert q)\ge c_q \gep^2$ 
with $c_q>0$ which depends continuously on $q$. We are dealing also with cases in which $q \searrow 0$: 
 $c_q$ can be chosen so that $c_q \ge q/4$  for $q \leq 1/2$ and $\gep \in [-1,1]$.  From this we readily obtain 
 that in the positive density case ($q_n \to p\in (0,1)$) 
 \begin{equation}
 \bbP_\xi\left( \vert {d_j^{(n)}} -n q_n\vert \, \ge \, \gep n \right) \, \leq \, 2 \exp( - n c_{q_n} \gep^2)\, ,
 \end{equation}
 and by choosing for example $\gep=\gep_n=n^{-c}$, $c\in (0,1/2)$, a union bound shows that $\lim_n b_n(\xi) =0$,
 $\bbP_\xi(\dd \xi)$-a.s..
In the zero density case instead ($q_n\to 0$, $\ga_n=1/q_n$ and $p=1$)
we have
\begin{equation}
 \bbP_\xi\left(\vert  \ga_n{d_j^{(n)}} -n \vert \, \ge \, \gep n \right)
 \, =\, 
  \bbP_\xi\left( \vert {d_j^{(n)}} -n q_n \vert \, \ge \, \gep n q_n \right) 
   \, \leq \, 2 \exp( - n c_{q_n} \gep^2)\, ,
 \end{equation}
and we see that if $\sum_n n \exp( - n q_n \gep_n^2/4)< \infty$
then  $\bbP_\xi(\dd \xi)$-a.s. there exists $n_0(\xi)<\infty$ such that  $b_n(\xi) \leq \gep _n$ 
for $n$ larger than $n_0(\xi)$. In particular  $1/q_n =o(n/\log n)$ ensures that we can find $\gep_n=o(1)$ such that $b_n(\xi) \leq \gep _n$  a.s. for $n$ sufficiently large. In fact if we set  $\gep^2_{ n}=4 c\frac{ \log n}{ n q_{ n}}=o(1)$ for a $c>2$, we have
 $\exp (- n q_{ n}  \varepsilon_{ n}^{ 2}/4)= \exp(- c\log(n))= { n^{- c}}$, so that $\sum_n n \exp( - n q_n \gep_n^2/4)< \infty$. 
\end{proof}
\medskip

Note that the assumptions of Proposition~\ref{prop:ER} do not require any independence between the vectors $(\xi_{i, 1}^{ (n)}, \ldots, \xi_{i, n}^{ (n)})$ for different $i$. If we assume this extra independence we are dealing with  Erd\H{o}s-R\'enyi multi-graphs: all the edges are independent, but
$\xi$ may have ones on the diagonal, so there are self loops. However it is straightforward to see that 
if $b_n(\xi)\to 0$ in the case we just dealt with, the same is true if we erase the self loops. Moreover
the analysis we have presented  covers only the case of asymmetric Erd\H{o}s-R\'enyi graphs, but the analysis of the symmetric case is 
almost identical. 
  
\subsubsection{Random regular graphs} 
One can build a graph in which every vertex has degree $d=d(n)$ provided $3\leq d<n$ and $dn$ even 
 \cite[Sec.~2.4]{cf:bollobas} and our results apply to these sequences provided that $\lim_{n\to \infty }d(n)=\infty$: in the vanishing density case 
 we can choose $\ga_n=n/d(n)$, so $b_n=0$ for every $n$ (cf. \eqref{eq:deg-cond}), and $d(n)$ must tend to infinity, i.e. $\lim_n \ga_n/n=0$, for \eqref{eq:u} to bear relevant information. For a performing algorithm choosing one of this regular graphs at random see e.g. \cite{10.1371/journal.pone.0010012,MCKAY199052,Britton2006} and references therein.   

\subsection{Examples II: Fokker-Planck PDEs may not faithfully reproduce the $n$ large (but finite) system on graphs}
\label{sec:pathologies} 
We now argue that, in spite of being appealing on a first look, our main results have strong limitations when we consider them from a {\sl practical} viewpoint, that is for $n$ large but on a {\sl reasonable} scale (e.g., the ones of simulations or of the true systems to which we would like to apply them, like for families of interacting cells). The problem is already present at a conceptual level:
Theorem~\ref{prop:prop_chaos_sparse} does not require the graph to be connected. We develop in some detail a specific case with two connected components, but it will be clear to the reader that several generalizations are possible. 

Consider the easiest set up, that is non disordered Kuramoto model (i.e., the  dynamical version of the mean field plane rotator model): $F\equiv 0$ and 
$\Gamma(\theta, \go, \theta', \go')= -K \sin (\theta-\theta')$ for every $\theta, \theta', \go$ and $\go'$. Let us assume for simplicity that $ \sigma=1$. We choose $n=2N$
and $\xi_{i,j}^{(n)}=1$ if and only if both $i$ and $j$ are in $\{1,\ldots, N\}$ or both are in $\{N+1,\ldots, 2N\}$. So $G^{(n)}$ 
has two connected components and both components are completely connected. $G^{(n)}$ satisfies \eqref{eq:deg-cond} with $p=1/2$ (more: $b_{2N}(\xi)=0$ for every $N$). This fact is of course rather trivial and it follows by analyzing two independent $N$ dimensional mean field systems. The F-P PDE   \eqref{eq:FP} in this case just reads
\begin{equation} 
\label{eq:FPK}
\partial_t q_t(\theta)\, \, = 
\frac{1}2 \partial_\theta^2 q_t(\theta) -\partial_\theta \left[ q_t(\theta)J* q_t (\theta) \right]
\, ,
\end{equation}
where $J(\cdot)=-K\sin(\cdot)/2$. This equation is studied in detail in the literature - see in particular \cite{cf:GPP} and references therein
- but the important point for us is that all stationary solutions of \eqref{eq:FPK} that are probability densities can be written
as $\theta \mapsto \frac{ 1}{ Z_{ K}} \exp(Kr_K \cos(\theta-\psi)$, where $\psi$ is an arbitrary phase, $Z_{ K}$ an appropriate normalization constant and $r_K\in[0, 1)$ is any solution of a suitable 
one dimensional map (e.g. \cite{cf:SFN,cf:BeGiPa}). This fixed point problem admits $0$ as a solution, this corresponds to the uniform probability density $1/(2\pi)$, but for $K>K_c=2$ it admits also a positive solution, which we just call $r_K$ and corresponds to a nontrivial probability density, corresponding to (partial) synchronization in the system.
For $K>2$  one can show \cite{cf:BeGiPa} suitable stability properties for the family of non trivial stationary solutions and it is straightforward to see that the uniform probability density is unstable. In fact if we rewrite \eqref{eq:FPK}
in terms of the Fourier coefficients 
\begin{equation}
\label{eq:Fourier}
c_k(t):=\int q_t(\theta) \cos(k\theta) \dd \theta \ \  \text{  and } \ \
s_k(t):=\int q_t(\theta) \sin(k\theta) \dd \theta,\, k\geq1\, ,
\end{equation}
one verifies that at a linear level all modes decouple and they solve the equation $\dot{x}= \gl_k x$, with $\gl_k=-k^2/2$  for every mode  $k\ge 2$ and $\gl_1=(K-2)/4$  (hence these are the two unstable modes, the zero mode is conserved). 
A more refined analysis, 
using the $N$ dimensional diffusion 
with initial condition given by IID random variables uniformly distributed on the circle,
shows that a good approximation for the large $N$ evolution
of the empirical mean $c_{1,N}(t):=\frac 1N \sum_{j=1}^N \cos(\theta_j(t))$ is the linear SDE
\begin{equation}
\label{eq:x}
\dd x(t)\, =\, \frac{(K-2)}4 x(t) \dd t + \frac 1{\sqrt{2N}} \dd B_t\, ,
\end{equation}
with initial condition $\cN(0, 1/(2N))$, independent of the (standard) Brownian motion $B_\cdot$. 
We present \eqref{eq:x} only in an informal way and a full treatment requires the fluctuation analysis of the system on times scales of order
$\log n$:
see e.g. \cite{cf:dPdH,cf:L} for fluctuation analysis on time scale of order one in the Kuramoto set-up and \cite{cf:CdP} for  a longer time analysis of the Kuramoto case, but not related to the question we are addressing here (for a full analysis of a case that is close in spirit to the one we are discussing --    in the context of reaction-diffusion particle systems -- see  \cite{cf:dMPV}). 
The same is true for the sine mode -- call $s_{1,N}(t)$ the corresponding empirical mean -- with  new (independent) Brownian motion and initial condition. These equations can be solved and the solution is a centered Gaussian process. Of particular interest is the {\sl synchronization} degree $r_N(t)=\sqrt{(c_{1,N}(t))^2+(s_{1,N}(t))^2}$ for which one computes (using the approximate evolution \eqref{eq:x})
\begin{equation}
\label{eq:rNt}
 \bbE\left[ (r_N(t))^2\right]\, \approx\, \frac 1N\exp (2\gl_1 t) \left(1+ \frac 1{2\gl_1}(1-\exp(-2\gl_1 t))\right)\,,
\end{equation}
for $N$ large and as far as the linear approximation is reliable. 
Again a full theory of the phenomenon we are describing is beyond the scope of this note, but, in analogy with \cite{cf:dMPV}, the fluctuations lead to the escape from the flat state at a time $a \log N$, $a:= (2\gl_1)^{-1}$ in the sense that for $t\leq  c \log N$, any $c \in (0, a)$, the empirical measure of the system converges for
$N \to \infty$ to the uniform probability, but for $t=c\log N$, any $c>a$, it converges to one of the {\sl synchronized} solutions
(see Fig.~\ref{fig:escape} and its caption). 

\begin{figure}[h]
\begin{center}
\leavevmode
\epsfxsize =11 cm
\psfragscanon
\psfrag{0}[c][0]{\tiny $0$}
\psfrag{5}[c][0]{\tiny $5$}
\psfrag{10}[c][0]{\tiny $10$}
\psfrag{15}[c][0]{\tiny $15$}
\psfrag{20}[c][0]{\tiny $20$}
\psfrag{pi}[c][l]{\tiny $\pi$}
\psfrag{mpi}[c][l]{\tiny $-\pi$}
\psfrag{t}[c][l]{\tiny $t$}
\psfrag{rt}[c][l]{\tiny $r_N(t)$}
\psfrag{06}[c][l]{\tiny $0.6$}
\psfrag{0.2}[c][c]{\tiny $0.2$}
\psfrag{0.4}[c][c]{\tiny $0.4$}
\psfrag{0.6}[c][c]{\tiny $0.6$}
\psfrag{0.8}[c][c]{\tiny $0.8$}
\epsfbox{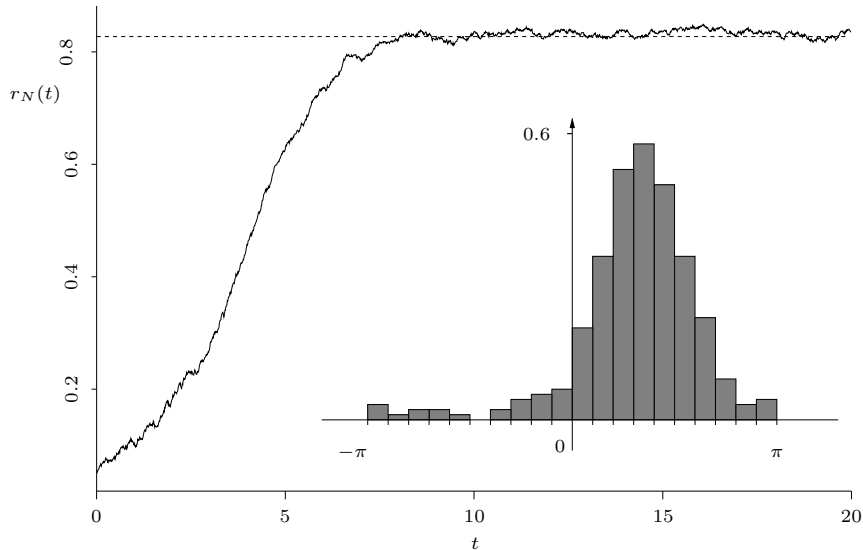}
\end{center}
\caption{\label{fig:escape} 
A numerical realization of the complete graph $N$ dimensional diffusion with $N=1000$, $K=4$ (so $\gl_1=1/2$) and  interaction potential $J(\cdot)=-(K/2)\sin(\cdot)$ to fit in the framework \eqref{eq:FPK}-\eqref{eq:rNt},  up to time $20$. This simulation deals therefore with only half of the system: the histogram  is the empirical measure limited to one complete graph and it is taken at time $t=20$, but the empirical measure is just about the same starting from $t\approx 7$, as suggested by the plot of the {\sl synchronization degree} $r_N(t)$. It is curious to note   that the
 right-hand side of \eqref{eq:rNt} becomes equal to one at $t=\log N /(2\gl_1) \approx 6.9$. Moreover, if it is evident that for 
 $t\ge7$ the system is very far from the initial {\sl stationary} profile $1/(2\pi)$, the deviation is already clear much earlier. Another crucial point that one easily guesses and verifies with the simulations is that the center of synchronization, see for example \cite{cf:BeGiPa} for  a precise definition, is uniformly distributed on the unit circle: in the realization we present the center is at about one radiant. When the whole system is composed by two independent and identically distributed $N$ dimensional diffusions, the flat initial condition will generate on the time scale $\log N$ an empirical measure that is the superposition of two empirical measures like the one in the figure, but with different centers (hence a bimodal distribution!). On the much longer time scale $t\propto N$ one can appreciate a random motion of the two synchronization centers \cite{cf:BGP,cf:Dahms}. 
}
\end{figure}

This calls for several considerations:

\medskip
\begin{enumerate}
\item Theorem~\ref{prop:prop_chaos_sparse} and Corollary~\ref{th:emp} cannot be substantially improved and, in general, there will be a breakdown on the time-scale $\log n$ of the information that it carries. This is particularly troublesome for the applications because
times of the order $\log n$ are essentially finite times from a numerical viewpoint, but also for many, possibly most of, real life systems.
\item Our result, as pointed out in the two item list at the end of Section~\ref{sec:main}, is twofold: let us reconsider these two items. For the first item,
we have seen that  we can have 
the breakdown of the reduction to F-P evolution, but  this  is not  surprising because it already happens on the complete graph. However  the second item is  more troublesome. In fact, the example we presented shows that on logarithmic time scale the evolution on general   graphs may have little to do 
with the complete graph counterpart, see Fig.~\ref{fig:escape} and its caption: the flat profile escapes to a bimodal distribution in the two component case we have studied, while on the complete graph the distribution becomes unimodal.
\item In spite of the breakdown of the proximity between the $n$-dimensional diffusion and the F-P limit, in various instances 
the analysis has been pushed well beyond finite or logarithmic times even in presence of phase transitions, that is  many stationary states for the limit F-P PDE: it is difficult to account for such a large literature, see e.g. \cite{cf:OV} for the case of multiple isolated and stable F-P stationary solutions in which 
stochastic deviations from F-P behavior happen on exponentially long times, see  e.g.  \cite{cf:BGP,cf:LP} (to limit ourselves to the Kuramoto context that we consider)  for cases in which multiple stable stationary solutions are not isolated and stochastic effects happen on much shorter time scales (power law in $n$), and of course in presence of unstable states the deviations are observed on the $\log n$ scale.
The example we have developed shows that going to longer time scales   cannot be done in our context without stronger hypotheses on the graph than just the degree condition
\eqref{eq:deg-cond}. So the issue is: how should one complement the uniform degree condition in order to push the proximity of the graph case
and the corresponding renormalized complete graph case to longer times? It is worth underlying that one of the obstacles to apply the iterative method used for example in  \cite{cf:BGP,cf:LP} is the fact that on a graph that is not complete the empirical measure
\eqref{eq:emp} does not carry all the information about the system.
\item Of course the previous question could be asked also in cases in which one expects results that are uniform in time
(for example in the sense of \cite[Sec.~3.2]{cf:CGM}, see \cite{cf:CGM} also for more references on this issue). In these cases it is not unreasonable to expect that our uniform degree condition could allow obtaining results that are uniform in time. 
\end{enumerate}

\medskip

\begin{remark}
In the example we have developed it is completely clear that the two-fold graph structure  becomes apparent only
on a long time scale because
 the initial condition we have chosen  is IID  and does not depend on the graph. 
This observation is elementary, but it reveals the fundamental difficulty in going beyond the results 
we have presented: the evolution does depend on the graph and so the state of the system 
at positive times  depends on the graph. Dealing with this dependence is the challenge.
\end{remark}

\medskip

Even if this note raises questions more than it gives answers we want to conclude this section by pointing out one more open issue: the case   of sparse graphs, that is graphs with bounded degree.
For the graphs we consider in this work  each degree $d_{ i}^{ (n)}$ diverges as $n\to\infty$ and our results  do
include {\sl almost all} Erd\H{o}s-R\'enyi graphs in this class,
leaving out the open issue of the regime where $d_{ i}^{ (n)}=o(\log(n))$ (recall \eqref{eq:cond_alpha_n}). The issue becomes particularly intriguing in the sparse case, 
say Erd\H{o}s-R\'enyi, that is $q_n \sim c/n$ and $ \mathbb{ E}(d_{ i}^{ (n)})\sim c$, when there is an infinite component (so $c>1$). The case of sparse graphs has been treated in details in equilibrium statistical mechanics  \cite{cf:MM,cf:DM}, but, to our knowledge, mostly for discrete spin models.

\section{Proofs}
\begin{proof}[Proof of Theorem~\ref{prop:prop_chaos_sparse}]
For every $s>0$
\begin{multline}
\left\vert \theta_{ i, s} - \bar \theta_{ i, s} \right\vert^{ 2} =  2 \int_{ 0}^{s} \left( \theta_{ i, u} - \bar \theta_{ i, u}\right) \left(F( \theta_{ i, u}, \go_i) - F( \bar \theta_{ i, u}, \go_i)\right) \dd u\\ + 2\int_{ 0}^{s}  \left( \theta_{ i, u} - \bar \theta_{ i, u}\right) \left(\frac{ \alpha_{ n}}{n} \sum_{ k=1}^{ n} \xi_{ i, k}^{ (n)} \Gamma( \theta_{ i, u}, \omega_{ i}, \theta_{ k, u}, \omega_{ k}) - p\int \Gamma(\bar \theta_{ i, u}, \omega_{ i}, \tilde\theta, \tilde \omega) \nu_{ u}(\dd \tilde\theta, \dd \tilde \omega)\right) \dd u\, .
\end{multline}
Using the Lipschitz-continuity of $F$, $L_F$ is the Lipschitz constant, we have
\begin{equation}
\begin{split}
\left\vert \theta_{ i, s} - \bar \theta_{ i, s} \right\vert^{ 2} &\leq  \left(2L_F +1\right) \int_{ 0}^{s}\left\vert  \theta_{ i, u} - \bar \theta_{ i, u}\right\vert^{ 2} \dd u\\
& + \int_{0}^{s} \left\vert\frac{ \alpha_{ n}}{ n} \sum_{ k=1}^{ n} \xi_{ i, k}^{ (n)} \Gamma( \theta_{ i, u}, \omega_{ i}, \theta_{ k, u}, \omega_{ k}) - p\int \Gamma(\bar \theta_{ i, u}, \omega_{ i}, \tilde\theta, \tilde \omega) \nu_{ u}(\dd \tilde\theta, \dd \tilde \omega
)\right\vert^{ 2}\dd u\, .
\end{split}
\end{equation}
Taking the supremum in $s\in[0, t]$ and the expectation $\mathbf{ E}[\cdot]$ w.r.t. the Brownian motions 
\begin{multline}
\mathbf{ E} \left[ \sup_{ s \in [0, t]}\left\vert \theta_{ i, s} - \bar \theta_{ i, s} \right\vert^{ 2}\right] \leq  \left(2L_F +1\right) \int_{ 0}^{t} \mathbf{ E} \left[ \left\vert  \theta_{ i, u} - \bar \theta_{ i, u}\right\vert^{ 2}\right] \dd u\\ + \int_{0}^{t} \mathbf{ E} \left[ \left\vert\frac{ \alpha_{ n}}{ n} \sum_{ k=1}^{ n} \xi_{ i, k}^{ (n)} \Gamma( \theta_{ i, u}, \omega_{ i}, \theta_{ k, u}, \omega_{ k}) - p\int \Gamma(\bar \theta_{ i, u}, \omega_{ i}, \tilde\theta, \tilde \omega) \nu_{ u}(\dd \tilde\theta, \dd \tilde \omega)\right\vert^{ 2}\right]\dd u\, ,
\end{multline}
so that
\begin{multline}
 \label{eq:diff_thetai_B}
\mathbf{ E} \left[ \sup_{ s \in [0, t]}\left\vert \theta_{ i, s} - \bar \theta_{ i, s} \right\vert^{ 2}\right] \leq  (2L_F +1) \int_{ 0}^{t}\mathbf{ E} \left[ \sup_{ v \in [0, u]}\left\vert  \theta_{ i, v} - \bar \theta_{ i, v}\right\vert^{ 2}\right] \dd u\\ + \int_{0}^{t} \mathbf{ E} \left[ \left\vert\frac{ \alpha_{ n}}{ n} \sum_{ k=1}^{ n} \xi_{ i, k}^{ (n)} \Gamma( \theta_{ i, u}, \omega_{ i}, \theta_{ k, u}, \omega_{ k}) - p \int \Gamma(\bar \theta_{ i, u}, \omega_{ i}, \tilde\theta, \tilde \omega) \nu_{ u}(\dd \tilde\theta, \dd \tilde \omega)\right\vert^{ 2}\right]\dd u\, .
\end{multline}
The last term in \eqref{eq:diff_thetai_B} can be bounded  above by $4 \int_{ 0}^{t}\left(\sum_{ k=1}^{ 4}B_{ n, i, s}^{ (k)}\right)\dd s$, where
\begin{equation}
\begin{split}
B_{ n, i, s}^{ (1)}&= \mathbf{ E} \left\vert \frac{ \alpha_{ n}}{ n} \sum_{ k=1}^{ n} \xi_{ i, k}^{ (n)} \left(\Gamma( \theta_{ i, s}, \omega_{ i}, \theta_{ k, s}, \omega_{ k})- \Gamma( \bar\theta_{ i, s}, \omega_{ i}, \theta_{ k, s}, \omega_{ k})\right) \right\vert^{ 2},\\
B_{ n, i, s}^{ (2)}&= \mathbf{ E} \left\vert \frac{ \alpha_{ n}}{n} \sum_{ k=1}^{ n} \xi_{ i, k}^{ (n)} \left(\Gamma( \bar\theta_{ i, s}, \omega_{ i}, \theta_{ k, s}, \omega_{ k}) - \Gamma( \bar\theta_{ i, s}, \omega_{ i}, \bar\theta_{ k, s}, \omega_{ k})\right) \right\vert^{ 2},\\
B_{ n, i, s}^{ (3)}&= \mathbf{ E} \left\vert \frac{ \alpha_{ n}}{ n} \sum_{ k=1}^{ n} \xi_{ i, k}^{ (n)} \left(\Gamma( \bar\theta_{ i, s}, \omega_{ i}, \bar\theta_{ k, s}, \omega_{ k}) - \int \Gamma( \bar\theta_{ i, s}, \omega_{ i}, \tilde\theta, \omega) \nu_{ s}(\dd \tilde\theta ,\dd \omega) \right) \right\vert^{ 2},\\
B_{ n, i, s}^{ (4)}&= \left(\frac{ \alpha_{ n}}{n} \sum_{ k=1}^{ n} \left(\xi_{ i, k}^{ (n)} - \frac{ p}{ \alpha_{ n}}\right)\right)^{ 2} \mathbf{ E} \left\vert\int \Gamma( \bar\theta_{ i, s}, \omega_{ i}, \tilde\theta, \omega) \nu_{ s}(\dd \tilde\theta, \dd \omega) \right\vert^{ 2}\, .
\end{split}
\end{equation}
For what concerns  $B^{ (1)}$ we have (with $L_\Gamma$ for the Lipschitz constant of $\Gamma$)
\begin{equation}
\begin{split}
\bbE \left(B_{ n, i, s}^{ (1)}\right) &\leq  L_\Gamma^{ 2} \left(\frac{ \alpha_{ n}}{ n} \sum_{ k=1}^{ n} \xi_{ i, k}^{ (n)}\right)^{ 2} \sup_{ r \in \{1, \ldots, n\} }\bbE\mathbf{ E} \left[\sup_{ u \in [0, s]}\left\vert \theta_{ r, u} - \bar \theta_{ r, u} \right\vert^{ 2}\right]\,.
\end{split}
\end{equation}
Concerning $B^{ (2)}$
\begin{equation}
\begin{split}
\bbE \left(B_{ n, i, s}^{ (2)}\right) &\leq L_\Gamma^{ 2} \bbE\mathbf{ E} \left[ \left(\frac{ \alpha_{ n}}{n} \sum_{ k=1}^{ n} \xi_{ i, k}^{ (n)} \left\vert \theta_{ k, s} - \bar\theta_{ k, s} \right\vert\right)^{ 2}\right]\\
&= L_\Gamma^{ 2}\left(\frac{ \alpha_{ n}}{n}\right)^{ 2} \sum_{ k,l=1}^{ n} \xi_{ i, k}^{ (n)}\xi_{ i, l}^{ (n)} \bbE\mathbf{ E} \left[ \left\vert \theta_{ k, s} - \bar\theta_{ k, s} \right\vert \left\vert \theta_{ l, s} - \bar\theta_{ l, s} \right\vert\right]\\
&\leq L_\Gamma^{ 2}\left(\frac{ \alpha_{ n}}{n}\right)^{ 2} \sum_{ k,l=1}^{ n} \xi_{ i, k}^{ (n)}\xi_{ i, l}^{ (n)} \frac{ 1}{ 2}  \bbE\mathbf{ E} \left[\left\vert \theta_{ k, s} - \bar\theta_{ k, s} \right\vert^{ 2} + \left\vert \theta_{ l, s} - \bar\theta_{ l, s} \right\vert^{ 2}\right]\\
& \leq L_\Gamma^{ 2} \left(\frac{ \alpha_{ n}}{ n}  \sum_{ k=1}^{ n} \xi_{ i, k}^{ (n)} \right)^{ 2} \sup_{ r \in \{1, \ldots, n\}}\bbE
\mathbf{ E} \left[\sup_{ u \in [0, s]}\left\vert \theta_{ r, u} - \bar\theta_{ r, u} \right\vert^{ 2} \right]\, .
\end{split}
\end{equation}
Concerning $B^{ (3)}$, first denote by 
\begin{equation}
\bar \Gamma_{ s}^{\theta, \omega}(\theta^{ \prime}, \omega^{ \prime})= \Gamma(\theta, \omega, \theta^{ \prime}, \omega^{ \prime}) - \int \Gamma(\theta, \omega, \tilde\theta, \tilde\omega) \nu_{ s}(\dd \tilde\theta,\dd \tilde\omega)\, ,
\end{equation} 
so that
\begin{equation}
\begin{split}
B_{ n, i, s}^{ (3)}&= \frac{ \alpha_{ n}^{ 2}}{ n^{ 2}} \sum_{ k,l=1}^{ n }\xi_{ i, k}^{ (n)} \xi_{ i, l}^{ (n)} \mathbf{ E} \left[\bar\Gamma_{ s}^{ \bar \theta_{ i, s}, \omega_{ i}}(\bar \theta_{ k, s}, \omega_{ k})\bar\Gamma_{ s}^{\bar \theta_{ i, s}, \omega_{ i}}(\bar \theta_{ l, s}, \omega_{ l})\right]\, .
\end{split}
\end{equation}
This gives
\begin{equation}
\label{eq:Gabar}
\begin{split}
\bbE \left(B_{ n, i, s}^{ (3)}\right)&= \frac{ \alpha_{ n}^{ 2}}{ n^{ 2}} \sum_{ k,l=1}^{ n }\xi_{ i, k}^{ (n)} \xi_{ i, l}^{ (n)} \bbE\mathbf{ E} \left[\bar\Gamma_{ s}^{ \bar \theta_{ i, s}, \omega_{ i}}(\bar \theta_{ k, s}, \omega_{ k})\bar\Gamma_{ s}^{\bar \theta_{ i, s}, \omega_{ i}}(\bar \theta_{ l, s}, \omega_{ l})\right]\\
&= \frac{ \alpha_{ n}^{ 2}}{ n^{ 2}} \sum_{ k=1}^{ n } \left(\xi_{ i, k}^{ (n)}\right)^{ 2} \bbE\mathbf{ E} \left[\bar\Gamma_{ s}^{ \bar \theta_{ i, s}, \omega_{ i}}(\bar \theta_{ k, s}, \omega_{ k})^{ 2}\right]\\
&\ \ + \frac{ \alpha_{ n}^{ 2}}{ n^{ 2}} \sumtwo{k, l\in \{1, \ldots, n\}:}{ k\neq l}\xi_{ i, k}^{ (n)} \xi_{ i, l}^{ (n)} \bbE\mathbf{ E} \left[\bar\Gamma_{ s}^{ \bar \theta_{ i, s}, \omega_{ i}}(\bar \theta_{ k, s}, \omega_{ k})\bar\Gamma_{ s}^{\bar \theta_{ i, s}, \omega_{ i}}(\bar \theta_{ l, s}, \omega_{ l})\right].
\end{split}
\end{equation}
By construction of $\bar \Gamma$, we see that the 
term in the last line of \eqref{eq:Gabar}
 is zero: just take the expectation with respect to $(\bar \theta_{ k, s}, \omega_{ k})$ if $k\neq i$,
 otherwise do the same but with respect to $(\bar \theta_{ l, s}, \omega_{ l})$.  Therefore
\begin{equation}
\begin{split}
\sup_{ i \in \{1, \ldots n\}} \bbE \left(B_{ n, i, s}^{ (3)}\right)&\leq 4 \left\Vert \Gamma \right\Vert_{ \infty}^{ 2} \left(\frac{ \alpha_{ n}^{ 2}}{ n^{ 2}} \sum_{ k=1}^{ n } \xi_{ i, k}^{ (n)}\right).
\end{split}
\end{equation}
Concerning $B^{ (4)}$, one has obviously
\begin{equation}
\bbE\left(B_{ n, i, s}^{ (4)}\right) \leq \left\Vert \Gamma \right\Vert_{ \infty}^{ 2}\left(\frac{ \alpha_{ n}}{n} \sum_{ k=1}^{ n} \left(\xi_{ i, k}^{ (n)} - \frac{ p}{ \alpha_{ n}}\right)\right)^{ 2}\, .
\end{equation}
Taking the expectation $ \bbE$ and the supremum over $i\in \{1, \ldots, n\}$ in \eqref{eq:diff_thetai_B}, we obtain
\begin{equation}
\label{eq:gronwall_ut}
u_{ t} \leq c\int_{ 0}^{t}  \left(1+ a_{ n}^{ 2}\right) u_{ s}\dd s +c t \left(\frac{ \alpha_{ n}}{ n} a_{ n} + b_{ n}^{ 2}\right), 
\end{equation}
 where we used the notation $u_t$ for the left-hand side in 
\eqref{eq:u}, $c= \max(2L_F+1, 2L_\Gamma^2, 4 \Vert \Gamma\Vert_\infty^2)$, $b_n$ is defined in \eqref{eq:deg-cond} and
\begin{equation}
a_{ n}\,=\, a_{ n}(\xi)\,:=\, \sup_{ i \in \{1, \ldots n\}} \left(\frac{ \alpha_{ n}}{ n} \sum_{ k=1}^{ n} \xi_{ i, k}^{ (n)}\right)
\, .
\end{equation}
Note that \eqref{eq:deg-cond} implies that $\lim_{n \to \infty} a_n = p$.
Inequality \eqref{eq:gronwall_ut} gives
\begin{equation}
u_{ t} \leq  \left(\frac{\frac{ \alpha_{ n}}{ n}a_{ n} + b_{ n}^{ 2}}{ 1+ a_{ n}^{ 2}}\right) \left(\exp \left(c(1+ a_{ n}^{ 2})t\right) -1\right)
\, ,
\end{equation}
and since $a_n \leq {2}p$ for $n$ sufficiently large (how large depends only on how fast $b_n$ tends to zero), we readily see  
that
\eqref{eq:u} holds with $C=3c$ and
the proof is complete. 
\end{proof}

\medskip

\begin{proof}[Proof of Corollary~\ref{th:emp}]
We start by observing that (see \cite[proof of Theorem~11.3.3]{cf:dudley}) for every $\gep>0$ there exists a finite set $\cL_\gep \subset \cL$ such that for every $\mu$ and $\nu$
\begin{equation}
\label{eq:bLbound}
d_{\mathrm{bL}}(\mu, \nu)\, \leq \, \gep +\max_{H \in \cL_\gep } \left \vert \int H \dd \mu - \int H \dd \nu \right\vert \, .
\end{equation}
So it suffices to show that 
$\bbE \bE \sup_{t \in [0, t_n]}\vert \int H \dd \mu_{n,t} - \int H \dd \nu_{t}\vert$ vanishes as $n\to \infty$ for an arbitrary choice of $H\in \cL$.
For this we remark that 
\begin{multline}
\left\vert \int H \dd \mu_{n,t} - \int H \dd \nu_{t}\right\vert \,=\\
\left \vert  \frac 1n \sum_{i=1}^n \left[H( \theta_{ i, t}, \go_i)-H(\bar \theta_{ i, t}, \go_i)\right]+
\frac 1n \sum_{i=1}^n \left[H(\bar \theta_{ i, t}, \go_i)- \bbE \bE H(\bar \theta_{ i, t}, \go_i)\right] \right \vert
\\
\,
\le \frac 1n  \sum_{i=1}^n \left \vert \theta_{i,t}- \bar \theta_{ i, t} \right \vert +
\left \vert \frac 1n \sum_{i=1}^n X_{i,t} \right \vert\, =:\, A^{(1)}_{n,t}+ A^{(2)}_{n,t}\, ,
\end{multline}
and we have introduced $X_{i,t}:= H(\bar \theta_{ i, t}, \go_i)- \bbE \bE H(\bar \theta_{ i, t}, \go_i)$. 
Since
\begin{equation}
\bbE \bE \sup_{t \le t_n}  A^{(1)}_{n,t} \, \le\, \sup_{i\in\{1, \ldots, n\}} \bbE \bE  \left[ \sup_{ t \in [0, t_n]} \left\vert \theta_{ i, t} -\bar \theta_{ i, t} \right\vert^{ 2}\right]^{1/2}\, ,
\end{equation}
 this term vanishes for $n\to \infty$ by  \eqref{eq:u} of Theorem~\ref{prop:prop_chaos_sparse}.

We are left with controlling the term containing $A^{(2)}_{n,t}$. For this note that $\{X_{i,t}\}_{i=1, \ldots, n}$ are IID random variables
that are bounded uniformly in $t$ (they are in fact bounded by one, since $H(\cdot)\in [0,1]$).
Therefore $\sup_{t\le t_n} \bbE \bE A^{(2)}_{n,t} \le 1/ \sqrt{n}$ and we are left with switching the supremum with the expectations.
For this we introduce a time discretization of step $n^{-1/4}$, i.e. $s_0=0$ and $s_{j+1}-s_j= n^{-1/4}$, and
$\ell_n:= \sup\{ j=0,1, \ldots:\, s_j \le t_n\}$.
Observe that since $\ga_n \ge 1$ eventually $t_n \le C^{-1} \log n$, so $\ell_n \le c n^{1/4} \log n$ for any $c>1/C$. 
We have
\begin{multline}
\bbE \bE \sup_{t \le t_n}  A^{(2)}_{n,t} \, \le\, \bbE \bE \sup_{j=0, \ldots, \ell_n }  A^{(2)}_{n,s_j} +
 \bbE \bE \sup_{j=0, \ldots, \ell_n } \sup_{s\in [0, n^{-1/4}]} \left \vert A^{(2)}_{n,s_j+s} -A^{(2)}_{n,s_j} \right \vert 
 \\
 \le \, \frac{\ell_n +1}{n^{1/2}} +
 \bbE \bE \sup_{j=0, \ldots, \ell_n } \sup_{s\in [0, n^{-1/4}]} \left \vert A^{(2)}_{n,s_j+s} -A^{(2)}_{n,s_j} \right \vert 
 \, , 
\end{multline}
and we are left with showing that the rightmost addend vanishes too. For this we recall that 
the random variables $\vert A^{(2)}_{n,s_j+s} -A^{(2)}_{n,s_j} \vert$ are uniformly bounded (they are bounded by two, because 
$\vert X_{i,t}\vert \le 1$). The $L^1$ estimate can therefore be replaced by an estimate in probability: it suffices to show that for
every $\gd>0$ we have 
\begin{equation}
\lim_{n \to \infty} \bbE \bP \left(\sup_{j=0, \ldots, \ell_n } \sup_{s\in [0, n^{-1/4}]} \left \vert A^{(2)}_{n,s_j+s} -A^{(2)}_{n,s_j} \right \vert \, > \gd 
\right)\, =\, 0\,,
\end{equation}
and, in turn, this is implied by 
\begin{equation}
\label{eq:probabest}
 \sup_{j=0, \ldots, \ell_n } \bbE \bP \left( \sup_{s\in [0, n^{-1/4}]} \left \vert A^{(2)}_{n,s_j+s} -A^{(2)}_{n,s_j} \right \vert \, > \gd
\right)\, =\, o\left(1/\ell_n\right)\,.
\end{equation}
For this we observe that $\sup_{s \in [0, n^{-1/4}]}\vert  X_{i,s_j+s}- X_{i,s_j}\vert$ 
is 
\begin{equation}
\label{eq:tobeE}
\sup_{s \in [0, n^{-1/4}]}\left \vert H(\bar \theta_{ i, s_j+s}, \go_i)-H(\bar \theta_{ i, s_j}, \go_i)- \bbE \bE 
\left[H(\bar \theta_{ i, s_j+s}, \go_i)-H(\bar \theta_{ i, s_j}, \go_i)\right] \right\vert
\end{equation}
and that $\vert H(\bar \theta_{ i, s_j+s}, \go_i)-H(\bar \theta_{ i, s_j}, \go_i)\vert $ is bounded by 
$\vert \bar \theta_{ i, s_j+s}-\bar \theta_{ i, s_j}\vert$, which, by using \eqref{eq:theta_nonlin}, is bounded by
$C_{F, \Gamma}n^{-1/4}+ \gs \vert B_{ i, s_j+s}- B_{ i, s_j}\vert$, with $C_{F, \Gamma}$ equal to $\max( \Vert F \Vert_\infty, \Vert \Gamma\Vert_\infty)$.
Therefore the expectation of the expression in 
\eqref{eq:tobeE} is bounded by 
\begin{equation}
2C_{F, \Gamma}n^{-1/4}+ 2
 \bE \left( \sup_{s\in [0, n^{-1/4}]}\left\vert B_s\right\vert \right)\, \le \, C_{F, \Gamma, \gs}n^{-1/8}\, ,
\end{equation}
with a suitable choice of the positive constant $C_{F, \Gamma, \gs}$. At this point we conclude with a Gaussian Large Deviation bound:
we are going  the exponential Chebychev inequality to the family of IID random variables 
$\{ \sup_{s\in [0, n^{-1/4}]} \in  \vert B_{ i, s_j+s}- B_{ i, s_j+s}\vert\}_{i=1, \ldots, n}$ 
whose law coincides with the law of $n^{-1/8}\sup_{t\in [0,1]} \vert B_t\vert$. 
Doob's submartingale inequality implies that $\bE \exp( a \sup_{t\in [0,1]} \vert B_t\vert) \le 2e \exp(a^2/2)$ for every $a\ge 0$
and therefore 
we have
\begin{equation}
\label{eq:aftobeE}
\sup_{j=0, \ldots, \ell_n } \bbE \bP \left( \frac 1n \sum_{i=1}^n \sup_{s\in [0, n^{-1/4}]}  \left \vert  X_{i,s_j+s}- X_{i,s_j} \right \vert >\gd
\right)\,\le \, \exp\left( -c {\gd^2} {n^{1/4}}\right)\,=\, o \left(1/\ell_n\right) \, ,
\end{equation}
for some $c>0$.
But the left-hand side of \eqref{eq:aftobeE} is just a rewriting of the left-hand side of
\eqref{eq:probabest}, so we are done.
 
\end{proof}

\section*{Acknowledgments}
G.G. is grateful to Bastien Fernandez, Roberto Livi  and Justin Salez for very helpful  discussions. We would like to thank the referees for their careful reading and useful remarks on the paper.

\end{document}